\documentclass{amsart}
\usepackage{amssymb}

\usepackage{amsfonts}

\setcounter{MaxMatrixCols}{10}

\newtheorem{theorem}{Theorem}
\theoremstyle{plain}

\newtheorem{corollary}{Corollary}

\newtheorem{definition}{Definition}

\numberwithin{equation}{section}
\input{tcilatex}

\begin{document}
\title[Involute Curves]{SOME CHARACTERIZATIONS FOR THE INVOLUTE CURVES IN
DUAL SPACE}
\author{Suleyman SENYURT}
\address{Departman of Mathematics Faculty of Art and Science Ordu University}
\email{senyurtsuleyman@hotmail.com}
\thanks{Corresponding Author}
\author{Mustafa BILICI}
\curraddr{Departman of Mathematics Faculty of Education Ondokuz Mayis
University}
\email{mbilici@omu.edu.tr}
\thanks{Thanks }
\author{Mustafa CALISKAN}
\address{Departman of Mathematics Faculty of Art and Science Ondokuz Mayis
University}
\subjclass[2000]{ 53A04; 45F10}
\keywords{Dual cuve, Involute, Evolute, Dual space}

\begin{abstract}
In this paper, we investigate some characterizations of involute -- evolute
curves in dual space. Then the relationships between dual frenet frame and
darboux vectors of these curves are found.
\end{abstract}

\maketitle

\section{Introduction}

Foundation notion with respect to involute of a curve are given in
3-dimensional Euclidean space $IR^{3}$, \cite{3.},\cite{8.}. The
relationships between frenet frames of involute - evolute curves and some
characterizations related to these curves in 3-dimensional Euclidean space $%
IR^{3}~$and Minkowski space $IR_{1}^{3}~$are described by Cal\i skan and
Bilici, \cite{1.},\cite{2.},\cite{3.}.

In this study some new characterizations with respect to involute -- evolute
curves $ID^{3}~$in are given.

\section{Preliminaries}

The set $~ID=\left\{ \overset{\Lambda }{a}=a+\varepsilon a^{\ast }\mid
a,a^{\ast }\epsilon IR,~\epsilon ^{2}=0\right\} ~$is called dual numbers set
by W.K. Clifford (1849-79) as a tool for his geometrical investigations.

Product and addition operations on this set are described respectively,

\begin{eqnarray*}
\left( a+\varepsilon a^{\ast }\right) +\left( b+\varepsilon b^{\ast }\right)
&=&\left( a+b\right) +\varepsilon \left( a^{\ast }+b^{\ast }\right) \\
\left( a+\varepsilon a^{\ast }\right) .\left( b+\varepsilon b^{\ast }\right)
&=&ab+\varepsilon \left( ab^{\ast }+a^{\ast }b\right)
\end{eqnarray*}

Algebric construction $\left( ID,+,.\right) $is unit and commutative ring.

Addition and scalar product on $ID^{3}=\left\{ \overset{\overset{\rightarrow 
}{\Lambda }}{a}\mid \overset{\overset{\rightarrow }{\Lambda }}{a}=\overset{%
\rightarrow }{a}+\varepsilon \overset{\rightarrow }{a^{\ast }},~\overset{%
\rightarrow }{a},\overset{\rightarrow }{a^{\ast }}\epsilon IR^{3}\right\} ~$%
are described.

\begin{eqnarray*}
\oplus &:&ID^{3}\times ID^{3}\rightarrow ID^{3}~,~\overset{\overset{%
\rightarrow }{\Lambda }}{a}\oplus \overset{\overset{\rightarrow }{\Lambda }}{%
b}=\left( \overset{\rightarrow }{a}+\overset{\rightarrow }{b}\right)
+\epsilon \left( \overset{\rightarrow }{a^{\ast }}+\overset{\rightarrow }{%
b^{\ast }}\right) \\
\odot &:&ID\times ID^{3}\rightarrow ID^{3}~,~~~~~\overset{\Lambda }{\lambda }%
\odot \overset{\overset{\rightarrow }{\Lambda }}{a}=\lambda \overset{%
\rightarrow }{a}+\epsilon \left( \lambda \overset{\rightarrow }{a^{\ast }}%
+\lambda ^{\ast }\overset{\rightarrow }{a}\right)
\end{eqnarray*}

Algebric construction $\left( ID^{3},\oplus ,ID,+,.,\odot \right) $ is a
modul. This modul is called $ID-Modul$.

The inner product and vectorel product of dual vectors $\overset{\overset{%
\rightarrow }{\Lambda }}{a},\overset{\overset{\rightarrow }{\Lambda }}{b}%
\epsilon ID^{3}~$are defined by respectively,

\begin{eqnarray*}
\left\langle ~,\right\rangle &:&ID^{3}\times ID^{3}\rightarrow
ID~,~\left\langle \overset{\overset{\rightarrow }{\Lambda }}{a},\overset{%
\overset{\rightarrow }{\Lambda }}{b}\right\rangle =\left\langle \overset{%
\rightarrow }{a},\overset{\rightarrow }{b}\right\rangle +\varepsilon \left(
\left\langle \overset{\rightarrow }{a},\overset{\rightarrow }{b^{\ast }}%
\right\rangle +\left\langle \overset{\rightarrow }{a^{\ast }},\overset{%
\rightarrow }{b}\right\rangle \right) \\
\wedge &:&ID^{3}\times ID^{3}\rightarrow ID^{3}~,~\overset{\overset{%
\rightarrow }{\Lambda }}{a}\wedge \overset{\overset{\rightarrow }{\Lambda }}{%
b}=\left( \overset{\rightarrow }{a}\wedge \overset{\rightarrow }{b}\right)
+\varepsilon \left( \overset{\rightarrow }{a}\wedge \overset{\rightarrow }{%
b^{\ast }}+\overset{\rightarrow }{a^{\ast }}\wedge \overset{\rightarrow }{b}%
\right)
\end{eqnarray*}

For $~\overset{\overset{\rightarrow }{\Lambda }}{a}\neq 0$, the norm $%
\left\Vert \overset{\overset{\rightarrow }{\Lambda }}{a}\right\Vert $of $~%
\overset{\overset{\rightarrow }{\Lambda }}{a}=\overset{\rightarrow }{a}%
+\varepsilon \overset{\rightarrow }{a^{\ast }}~$is defined by

\begin{equation*}
\left\Vert \overset{\overset{\rightarrow }{\Lambda }}{a}\right\Vert =\sqrt{%
\left\langle \overset{\overset{\rightarrow }{\Lambda }}{a},\overset{\overset{%
\rightarrow }{\Lambda }}{a}\right\rangle }=\left\Vert \overset{\rightarrow }{%
a}\right\Vert +\varepsilon \frac{\left\langle \overset{\rightarrow }{a},%
\overset{\rightarrow }{a^{\ast }}\right\rangle }{\left\Vert \overset{%
\rightarrow }{a}\right\Vert }~~,~\left\Vert \overset{\rightarrow }{a}%
\right\Vert \neq 0.~
\end{equation*}%
\qquad \qquad

The angle between unit dual vectors $\overset{\overset{\rightarrow }{\Lambda 
}}{a\text{ }}$and $\overset{\overset{\rightarrow }{\Lambda }}{b}~\Phi
=\varphi +\varepsilon \varphi ^{\ast }~$is called dual angle and this angle
is denoted by

\begin{equation*}
\left\langle \overset{\overset{\rightarrow }{\Lambda }}{a},\overset{\overset{%
\rightarrow }{\Lambda }}{b}\right\rangle =\text{cos}\left( \Phi \right) =%
\text{cos}\left( \varphi \right) -\varepsilon \varphi ^{\ast }\text{sin}%
\left( \varphi \right)
\end{equation*}%
\qquad \qquad

Let

\begin{eqnarray*}
\overset{\Lambda }{\alpha } &:&I\subset IR\rightarrow ID^{3} \\
\text{ }s &\rightarrow &\overset{\Lambda }{\alpha }\left( s\right) =\alpha
\left( s\right) +\varepsilon \alpha ^{\ast }\left( s\right)
\end{eqnarray*}

be differential unit speed dual curve in dual space $ID^{3}$. Denote by $%
\left\{ T,N,B\right\} ~$the moving dual frenet frame along the dual space
curve $\overset{\Lambda }{\alpha }\left( s\right) ~$in the dual space $ID^{3}
$. Then $T,N~$and $B~$are the dual tangent, the dual principal normal and
the dual binormal vector fields, respectively. The function $\kappa \left(
s\right) =k_{1}+\varepsilon k_{1}^{\ast }~$and $\tau \left( s\right)
=k_{2}+\varepsilon k_{2}^{\ast }~$are called dual curvature and dual torsion
of $\overset{\Lambda }{\alpha }$, respectively. Then for the dual curve $%
\overset{\Lambda }{\alpha }~$the frenet formulae are given by,

\begin{equation}
\left\{ 
\begin{array}{c}
T^{^{\prime }}\left( s\right) =\kappa \left( s\right) N\left( s\right) \\ 
N^{^{\prime }}\left( s\right) =-\kappa \left( s\right) T\left( s\right)
+\tau \left( s\right) B\left( s\right) \\ 
B^{^{\prime }}\left( s\right) =-\tau \left( s\right) N\left( s\right)%
\end{array}%
\right.  \label{2.1}
\end{equation}%
$~\ \ \ \ \ \ \ \ \ \ \ \ \ \ \ \ \ \ \ \ \ \ \ \ \ \ \ \ \ \ \ \ \ \ \ \ \
\ \ \ \ \ \ \ \ \ \ \ \ \ \ \ \ \ \ \ \ \ \ \ \ \ \ \ \ \ \ \ \ \ \ \ \ \ \
\ \ \ \ \ \ \ \ \ \ \ \ \ \ \ \ \ \ $

The formulae (\ref{2.1}) are called the frenet formulae of dual curve in 
\cite{9.}. In this palace curvature and torsion are calculated by,

\begin{equation}
\kappa \left( s\right) =\sqrt{\left\langle T^{^{\prime }},T^{^{\prime
}}\right\rangle }~,~\tau \left( s\right) =\frac{\text{det}\left(
T,T^{^{\prime }},T^{^{\prime \prime }}\right) }{\left\langle T^{^{\prime
}},T^{^{\prime }}\right\rangle }  \label{2.2}
\end{equation}

If $\alpha ~$is not unit speed curve, then curvature and torsion are
calculated by,

\begin{equation}
\kappa \left( s\right) =\frac{\left\Vert \alpha ^{^{\prime }}\left( s\right)
\wedge \alpha ^{^{\prime \prime }}\left( s\right) \right\Vert }{\left\Vert
\alpha ^{^{\prime }}\left( s\right) \right\Vert ^{3}}~~,~\tau \left(
s\right) =\frac{\text{det}\left( \alpha ^{^{\prime }}\left( s\right) ,\alpha
^{^{\prime \prime }}\left( s\right) ,\alpha ^{^{\prime \prime \prime
}}\left( s\right) \right) }{\left\Vert \alpha ^{^{\prime }}\left( s\right)
\wedge \alpha ^{^{\prime \prime }}\left( s\right) \right\Vert ^{2}}
\label{2.3}
\end{equation}

If formulae (\ref{2.1}) is separated into the real and dual part, we can
obtain

\begin{equation}
\left\{ 
\begin{array}{c}
t^{^{\prime }}\left( s\right) =k_{1}n \\ 
n^{^{\prime }}\left( s\right) =-k_{1}t+k_{2}b \\ 
b^{^{\prime }}\left( s\right) =-k_{2}n%
\end{array}%
\right.  \label{2.4}
\end{equation}

\begin{equation}
\left\{ 
\begin{array}{c}
t^{\ast ^{\prime }}\left( s\right) =k_{1}n^{\ast }+k_{1}^{\ast }n \\ 
n^{\ast ^{\prime }}\left( s\right) =-k_{1}t^{\ast }-k_{1}^{\ast
}t+k_{2}b^{\ast }+k_{2}^{\ast }b \\ 
b^{\ast ^{\prime }}\left( s\right) =-k_{2}n^{\ast }-k_{2}^{\ast }n%
\end{array}%
\right.  \label{2.5}
\end{equation}

\section{Some Characterizations Involute of Dual Curves}

\begin{definition}
Let $\alpha :I\rightarrow ID^{3}~$and $\beta :I\rightarrow ID^{3}~$be dual
unit speed curves. If the tangent lines of the dual curve $\alpha $ is
orthogonal to the tangent lines of the dual curve $\beta $ , the dual curve $%
\beta $is called involute of the dual curve $\alpha $ or the dual curve $%
\alpha ~$is called evolute of the dual curve $\beta $. According to this
definition, if the tangent of the dual curve $\alpha $ is denoted by $T~$and
the tangent of the dual curve $\beta $ is denoted by $\overset{-}{T}$, we
can write
\end{definition}

\begin{equation}
\left\langle T,\overset{-}{T}\right\rangle =0  \label{3.1}
\end{equation}

\begin{theorem}
Let $\alpha $ and $\beta ~$be dual curves. If the dual curve~$\beta $
involute of the dual curve $\alpha $, we can write
\end{theorem}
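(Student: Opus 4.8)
The plan is to exploit the geometric meaning of an involute: every point of $\beta$ lies on the tangent line of $\alpha$ issuing from the corresponding point. First I would record this structural fact as
\[
\beta(s) = \alpha(s) + \lambda(s)\,T(s),
\]
where $\lambda = \lambda(s)$ is a dual-valued function yet to be determined. This is the only input needed beyond the orthogonality relation \eqref{3.1}, and it encodes precisely that $\beta$ sits on the tangent of $\alpha$.

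Next I would differentiate with respect to the dual arc-length parameter $s$. Since $\alpha$ is a dual unit-speed curve we have $\alpha' = T$, and the first dual Frenet formula of \eqref{2.1} gives $T' = \kappa N$. Substituting yields
\[
\beta'(s) = (1+\lambda')\,T + \lambda\,\kappa\,N,
\]
expressed in the moving dual Frenet frame $\{T,N,B\}$ of $\alpha$. Because the dual tangent $\overline{T}$ of $\beta$ is parallel to $\beta'$, the defining condition $\left\langle T,\overline{T}\right\rangle = 0$ of \eqref{3.1} is equivalent to $\left\langle T,\beta'\right\rangle = 0$.

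I would then evaluate this dual inner product. As the dual Frenet frame is orthonormal, so that $\left\langle T,T\right\rangle = 1$ and $\left\langle T,N\right\rangle = 0$ hold as dual numbers, the $N$-term drops out and one is left with $1+\lambda' = 0$. Integrating gives $\lambda(s) = c - s$ with $c$ a dual constant, whence
\[
\beta(s) = \alpha(s) + (c-s)\,T(s),
\]
which is the asserted relation.

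The step I expect to require the most care is the final inner-product computation, since it is an identity between dual numbers: both the real part and the $\varepsilon$-part of $\left\langle T,\beta'\right\rangle$ must vanish simultaneously, and one must check that the dual norm $\left\Vert \beta'\right\Vert$ has nonzero real part so that normalizing $\overline{T}$ by dividing by it is legitimate and loses no information. Verifying that the constant of integration is a genuine dual constant $c = c_{1}+\varepsilon c_{2}$, and that the orthonormality of the dual frame indeed holds over the dual ring $ID$, is where the dual structure enters rather than a mere transcription of the Euclidean argument.
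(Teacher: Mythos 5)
Your proof is correct and follows essentially the same route as the paper: write $\beta(s)=\alpha(s)+\lambda(s)T(s)$, differentiate using the dual Frenet formulae, apply the orthogonality $\left\langle T,\overline{T}\right\rangle =0$ to conclude $1+\lambda'=0$, and integrate to obtain $\lambda(s)=(c_{1}-s)+\varepsilon c_{2}$. The only cosmetic difference is that you integrate the dual equation $\lambda'=-1$ directly with a dual constant of integration $c=c_{1}+\varepsilon c_{2}$, whereas the paper first splits it into the real part $\mu'=-1$ and the dual part $\mu^{\ast\prime}=0$ and integrates each separately; your explicit attention to the zero-divisor issue when normalizing $\beta'$ (requiring its real part to be nonzero) is a point the paper passes over silently.
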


\begin{equation*}
\beta \left( s\right) =\alpha \left( s\right) +\left[ \left( c_{1}-s\right)
+\varepsilon c_{2}\right] T\left( s\right) ~\,,~~c_{1},c_{2}\epsilon IR.
\end{equation*}

\begin{proof}
Then by the definition we can assume that

\begin{equation}
\beta \left( s\right) =\alpha \left( s\right) +\lambda T\left( s\right)
~\,~,~~\lambda \left( s\right) =\mu \left( s\right) +\varepsilon \mu ^{\ast
}\left( s\right)  \label{3.2}
\end{equation}

For some function $\lambda \left( s\right) $. By taking derivate of the
equation (\ref{3.2}) with respect to $s$ and applying the frenet formulae (%
\ref{2.1}) we have

\begin{equation*}
\frac{d\beta }{ds}=\left( 1+\frac{d\lambda }{ds}\right) T+\lambda \kappa N
\end{equation*}

where $s$ and $s^{\ast }~$are arc parameter of the dual curves $\alpha ~$and 
$\beta $, respectively,

\begin{equation}
\overset{-}{T}\frac{ds^{\ast }}{ds}=\left( 1+\frac{d\lambda }{ds}\right)
T+\lambda \kappa N  \label{3.3}
\end{equation}

Taking the inner product of (\ref{3.3}) with $T$~we have

\begin{equation}
\frac{ds^{\ast }}{ds}\left\langle T,\overset{-}{T}\right\rangle =\left( 1+%
\frac{d\lambda }{ds}\right) \left\langle T,T\right\rangle +\lambda
\left\langle T,N\right\rangle  \label{3.4}
\end{equation}

By the defination we have

\begin{equation*}
\left\langle T,\overset{-}{T}\right\rangle =0
\end{equation*}

By substituting the last equation in (\ref{3.4}) we get

\begin{equation}
1+\frac{d\lambda }{ds}=0~\text{and }\frac{d}{ds}\left( \mu \left( s\right)
+\varepsilon \mu ^{\ast }\left( s\right) \right) =-1  \label{3.5}
\end{equation}

The necessary operations are maken, we get

\begin{equation*}
\mu ^{^{\prime }}\left( s\right) =-1~\text{and }\mu ^{\ast ^{\prime }}\left(
s\right) =0
\end{equation*}

By taking the integral of the last equation we get

\begin{equation}
\mu \left( s\right) =c_{1}-s~\text{and }\mu ^{\ast }\left( s\right) =c_{2}
\label{3.6}
\end{equation}

By substituting (\ref{3.6}) in (\ref{3.2}) we get

\begin{equation}
\beta \left( s\right) -\alpha \left( s\right) =\left[ \left( c_{1}-s\right)
+\varepsilon c_{2}\right] T\left( s\right) .  \label{3.7}
\end{equation}
\end{proof}

\qquad \qquad

\begin{corollary}
The distance between the dual curves $\beta ~$and $\alpha ~$is $\left\vert
c_{1}-s\right\vert \mp \varepsilon c_{2}$.
\end{corollary}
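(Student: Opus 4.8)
The plan is to read the connecting vector straight off the preceding theorem and compute its dual norm. That theorem gives
$$\beta(s) - \alpha(s) = \left[(c_1 - s) + \varepsilon c_2\right] T(s),$$
so the distance between the two curves at parameter $s$ is the dual norm $\left\Vert \beta(s) - \alpha(s)\right\Vert$ of this difference. Writing $\lambda = (c_1 - s) + \varepsilon c_2$ for the dual scalar coefficient, the task reduces entirely to evaluating $\left\Vert \lambda T(s)\right\Vert$.

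My first step would be to exploit that $T$ is a \emph{dual unit} vector, so that $\langle T, T\rangle = 1$ in $ID$. Using bilinearity of the dual inner product over the ring $ID$, this gives $\langle \lambda T, \lambda T\rangle = \lambda^2 \langle T, T\rangle = \lambda^2$, whence $\left\Vert \lambda T\right\Vert = \sqrt{\lambda^2} = |\lambda|$. Thus the distance equals the dual modulus of the scalar $\lambda$, and it only remains to identify this modulus explicitly.

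To compute $|\lambda|$ for $\lambda = \mu + \varepsilon \mu^*$ with $\mu = c_1 - s$ and $\mu^* = c_2$, I would apply the norm formula from the Preliminaries, read as the modulus of a one-dimensional dual vector. This yields
$$|\lambda| = |\mu| + \varepsilon \frac{\mu \mu^*}{|\mu|} = |c_1 - s| + \varepsilon\, \mathrm{sgn}(c_1 - s)\, c_2,$$
which is precisely $|c_1 - s| \mp \varepsilon c_2$, with the sign of the dual part fixed by the sign of $c_1 - s$.

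The one place that needs care is this final sign. Taking the square root of the dual square $\lambda^2$ is not purely formal: the real norm contributes $|\mu|$ in the denominator of the dual part, so the dual term is $\mu \mu^*/|\mu|$ rather than $\mu^*$, and its sign flips with $\mathrm{sgn}(c_1 - s)$. Recognizing that this ratio equals $\pm c_2$ according to whether $s$ is less than or greater than $c_1$ is exactly what produces the $\mp$ recorded in the statement; the remainder is direct substitution.
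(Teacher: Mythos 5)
Your proposal is correct and follows essentially the same route as the paper: the paper's proof consists of the single step ``take the norm of $\beta(s)-\alpha(s)=\left[(c_{1}-s)+\varepsilon c_{2}\right]T(s)$,'' and you carry out exactly that computation, using $\langle T,T\rangle=1$ to reduce to the dual modulus of the scalar and the norm formula $\Vert a+\varepsilon a^{\ast}\Vert=\Vert a\Vert+\varepsilon\langle a,a^{\ast}\rangle/\Vert a\Vert$ to obtain $\vert c_{1}-s\vert+\varepsilon\,\mathrm{sgn}(c_{1}-s)\,c_{2}$. Your explicit treatment of the sign $\mathrm{sgn}(c_{1}-s)$, which the paper leaves implicit in its $\mp$ notation, is a welcome clarification rather than a deviation.
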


\begin{proof}
By taking the norm of the equation (\ref{3.7}) we get\qquad

\begin{equation}
d\left( \alpha \left( s\right) ,\beta \left( s\right) \right) =\left\vert
c_{1}-s\right\vert \mp \varepsilon c_{2}  \label{3.8}
\end{equation}%
\qquad
\end{proof}

\begin{theorem}
Let $\alpha ,\beta ~$be dual curves. If the dual curve~$\beta $ involute of
the dual curve $\alpha $, The relationships between the dual frenet vectors
of the dual curves $\alpha ~$and~$\beta $
\end{theorem}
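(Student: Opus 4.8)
The plan is to build all three frame relations directly on top of the previous theorem, which supplies $\lambda = (c_1 - s) + \varepsilon c_2$ and hence $\frac{d\lambda}{ds} = -1$. Substituting this into equation (\ref{3.3}) annihilates the tangential term and leaves
\[
\overset{-}{T}\,\frac{ds^{\ast }}{ds} = \lambda \kappa\, N .
\]
Since $\overset{-}{T}$ and $N$ are both unit dual vectors, I would read off the direction to conclude $\overset{-}{T} = N$, the scalar factor $\frac{ds^{\ast }}{ds} = \lambda \kappa$ simply recording the dual change of arc length between $\alpha$ and $\beta$.

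To obtain $\overset{-}{N}$, the next step is to differentiate the relation $\overset{-}{T} = N$ with respect to $s$. On the left the chain rule together with the dual Frenet formula (\ref{2.1}) applied to $\beta$ gives $\frac{d\overset{-}{T}}{ds} = \overset{-}{\kappa}\,\overset{-}{N}\,\frac{ds^{\ast }}{ds}$, while on the right the Frenet formula for $\alpha$ gives $\frac{dN}{ds} = -\kappa T + \tau B$. Equating the two expressions and normalizing removes the scalar factors and yields
\[
\overset{-}{N} = \frac{-\kappa T + \tau B}{\sqrt{\kappa^{2}+\tau^{2}}},
\]
where $\sqrt{\kappa^{2}+\tau^{2}}$ is the dual norm of the dual vector $-\kappa T + \tau B$ computed by the norm formula of the preliminaries.

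Finally, $\overset{-}{B}$ follows from $\overset{-}{B} = \overset{-}{T}\wedge\overset{-}{N}$. Inserting $\overset{-}{T} = N$ and the expression for $\overset{-}{N}$, distributing the dual vector product, and applying the frame identities $N\wedge T = -B$ and $N\wedge B = T$ gives
\[
\overset{-}{B} = \frac{\tau T + \kappa B}{\sqrt{\kappa^{2}+\tau^{2}}} .
\]
Collecting the three identities into a single matrix relating $\{\overset{-}{T},\overset{-}{N},\overset{-}{B}\}$ to $\{T,N,B\}$ then completes the argument.

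The main subtlety is that the entire computation takes place over the dual ring, so $\kappa = k_{1}+\varepsilon k_{1}^{\ast}$ and $\tau = k_{2}+\varepsilon k_{2}^{\ast}$ carry nontrivial dual parts and the quantity $\sqrt{\kappa^{2}+\tau^{2}}$ must be expanded as a dual number through the norm expansion given earlier. The care-requiring point is to justify the dual normalization, namely that the real part $\sqrt{k_{1}^{2}+k_{2}^{2}}$ is nonzero so that division by the dual norm is defined, and to fix the orientation and sign conventions so that $\{\overset{-}{T},\overset{-}{N},\overset{-}{B}\}$ comes out as a correctly oriented dual Frenet frame rather than its mirror image.
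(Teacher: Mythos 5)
Your proposal is correct, but after the shared first step it takes a genuinely different route from the paper. Both arguments begin identically: since $\lambda ^{\prime }=-1$, the tangential term in (\ref{3.3}) vanishes, $\beta ^{\prime }=\lambda \kappa N$, and hence $\overset{-}{T}=N$. The paper then computes the second derivative $\beta ^{\prime \prime }=-\lambda \kappa ^{2}T+\left( \lambda \kappa ^{\prime }-\kappa \right) N+\lambda \kappa \tau B$, forms $\beta ^{\prime }\wedge \beta ^{\prime \prime }=\lambda ^{2}\kappa ^{2}\tau T+\lambda ^{2}\kappa ^{3}B$ with norm $\lambda ^{2}\kappa ^{2}\sqrt{\kappa ^{2}+\tau ^{2}}$, reads off $\overset{-}{B}$ from the non-unit-speed formula $\overset{-}{B}=\beta ^{\prime }\wedge \beta ^{\prime \prime }/\left\Vert \beta ^{\prime }\wedge \beta ^{\prime \prime }\right\Vert $, and only afterwards recovers $\overset{-}{N}=\overset{-}{B}\wedge \overset{-}{T}$. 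You go the other way around: differentiating $\overset{-}{T}=N$ and equating the two Frenet expansions gives $\overset{-}{\kappa }\,\overset{-}{N}\,\frac{ds^{\ast }}{ds}=-\kappa T+\tau B$, which you normalize to obtain $\overset{-}{N}$ first, and then set $\overset{-}{B}=\overset{-}{T}\wedge \overset{-}{N}$. Both are sound. Your route avoids second derivatives and the cross-product norm computation entirely, and it is in fact the same device the paper itself uses in the following theorem to compute $\overset{-}{\kappa }$; the paper's route has the side benefit that the intermediate quantities (\ref{3.11}) and (\ref{3.12}) are reused there in the computation of the torsion $\overset{-}{\tau }$. Your normalization step does require that $\overset{-}{\kappa }\,\frac{ds^{\ast }}{ds}$ have positive real part so that the normalized vector is $+\overset{-}{N}$ rather than $-\overset{-}{N}$, an orientation point you correctly flag. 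The only missing piece is the final translation: the theorem is stated through the dual angle $\Phi $ between the Darboux vector $W$ of $\alpha $ and the binormal $B$, so you must invoke $\sin \Phi =\tau /\sqrt{\kappa ^{2}+\tau ^{2}}$ and $\cos \Phi =\kappa /\sqrt{\kappa ^{2}+\tau ^{2}}$ (the paper's (\ref{3.15}), whose printed denominators are evidently missing the square root) to convert your coefficients into the stated forms $-\cos \Phi \,T+\sin \Phi \,B$ and $\sin \Phi \,T+\cos \Phi \,B$; since the paper also performs this identification only after its proof, this is a cosmetic rather than substantive gap.
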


\begin{equation*}
\left\{ 
\begin{array}{c}
\overset{-}{T}=N \\ 
\overset{-}{N}=-\text{cos}\Phi T+\text{sin}\Phi B \\ 
\overset{-}{B}=\text{sin}\Phi T+\text{cos}\Phi B%
\end{array}%
\right.
\end{equation*}

\qquad \qquad

\begin{proof}
By differentiating the equation (\ref{3.2}) with respect to $s$ we obtain

\begin{equation}
\beta ^{^{\prime }}\left( s\right) =\lambda \kappa \left( s\right) N\left(
s\right) ~,~\lambda =\left( c_{1}-s\right) +\varepsilon c_{2}  \label{3.9}
\end{equation}

and

\begin{equation*}
\left\Vert \beta ^{^{\prime }}\left( s\right) \right\Vert =\lambda \kappa
\left( s\right)
\end{equation*}

Thus, the tangent vector of$~\beta $ is found

\begin{equation*}
\overset{-}{T}=\frac{\beta ^{^{\prime }}\left( s\right) }{\left\Vert \beta
^{^{\prime }}\left( s\right) \right\Vert }=\frac{\lambda \kappa \left(
s\right) N\left( s\right) }{\lambda \kappa \left( s\right) }
\end{equation*}

If we arrange the last equation we obtain

\begin{equation}
\overset{-}{T}=N\left( s\right)  \label{3.10}
\end{equation}

By differentiating the equation (\ref{3.9}) with respect to $s$ we obtain

\begin{equation*}
\beta ^{^{\prime \prime }}=-\lambda \kappa ^{2}T+\left( \lambda \kappa
^{^{\prime }}-\kappa \right) N+\lambda \kappa \tau B
\end{equation*}

If the cross product $\beta ^{^{\prime }}\wedge \beta ^{^{\prime \prime }}~$%
is calculated we have

\begin{equation}
\beta ^{^{\prime }}\wedge \beta ^{^{\prime \prime }}=\lambda ^{2}\kappa
^{2}\tau T+\lambda ^{2}\kappa ^{3}B  \label{3.11}
\end{equation}

The norm of vector $\beta ^{^{\prime }}\wedge \beta ^{^{\prime \prime }}~$is
found\qquad

\begin{equation}
\left\Vert \beta ^{^{\prime }}\wedge \beta ^{^{\prime \prime }}\right\Vert
=\lambda ^{2}\kappa ^{2}\sqrt{\kappa ^{2}+\tau ^{2}}  \label{3.12}
\end{equation}

For the dual binormal vector of the dual curve $\beta ~$we can write

\begin{equation*}
\overset{-}{B}=\frac{\beta ^{^{\prime }}\wedge \beta ^{^{\prime \prime }}}{%
\left\Vert \beta ^{^{\prime }}\wedge \beta ^{^{\prime \prime }}\right\Vert }
\end{equation*}

By substituting (\ref{3.11}) and (\ref{3.12}) in the last equation we get

\begin{equation}
\overset{-}{B}=\frac{\tau }{\sqrt{\kappa ^{2}+\tau ^{2}}}T+\frac{\kappa }{%
\sqrt{\kappa ^{2}+\tau ^{2}}}B  \label{3.13}
\end{equation}

For the dual principal normal vector of the dual curve $\beta $\ we can write

\begin{equation*}
\overset{-}{N}=\overset{-}{B}\wedge \overset{-}{T}
\end{equation*}

and

\begin{equation}
\overset{-}{N}=-\frac{\kappa }{\sqrt{\kappa ^{2}+\tau ^{2}}}T+\frac{\tau }{%
\sqrt{\kappa ^{2}+\tau ^{2}}}B  \label{3.14}
\end{equation}
\end{proof}

Let $\Phi ~\left( \Phi =\varphi +\varepsilon \varphi ^{\ast
}~\,,~\varepsilon ^{2}=0\right) $\ be dual angle between the dual darboux
vector $W$ of $\alpha ~$and dual unit binormal vector $B$ in this situation
we can write

\begin{equation}
\text{sin}\Phi =\frac{\tau }{\kappa ^{2}+\tau ^{2}}\,\,,~\ \text{cos}\Phi =%
\frac{\kappa }{\kappa ^{2}+\tau ^{2}}  \label{3.15}
\end{equation}

By substituting (\ref{3.15}) in (\ref{3.12}) and (\ref{3.13}) the proof is
completed.

If the equation

\begin{equation*}
\left\{ 
\begin{array}{c}
\overset{-}{T}=N \\ 
\overset{-}{N}=-\text{cos}\Phi T+\text{sin}\Phi B \\ 
\overset{-}{B}=\text{sin}\Phi T+\text{cos}\Phi B%
\end{array}%
\right.
\end{equation*}

is separated into the real and dual part, we can obtain

\begin{equation*}
\left\{ 
\begin{array}{c}
\overset{-}{t}=n \\ 
\overset{-}{n}=-\text{cos}\varphi t+\text{sin}\varphi b \\ 
\overset{-}{b}=\text{sin}\varphi t+\text{cos}\varphi b%
\end{array}%
\right.
\end{equation*}

\begin{equation*}
\left\{ 
\begin{array}{c}
\overset{-}{t}^{\ast }=n^{\ast } \\ 
\overset{-}{n}^{\ast }=-\text{cos}\varphi t^{\ast }+\text{sin}\varphi
b^{\ast }+\varphi ^{\ast }\left( \text{sin}\varphi t+\text{cos}\varphi
b\right) \\ 
\overset{-}{b}^{\ast }=\text{sin}\varphi t^{\ast }+\text{cos}\varphi b^{\ast
}+\varphi ^{\ast }\left( \text{cos}\varphi t-\text{sin}\varphi b\right)%
\end{array}%
\right.
\end{equation*}

On the way

\begin{equation*}
\left\{ 
\begin{array}{c}
\text{sin}\Phi =\text{sin}\left( \varphi +\varepsilon \varphi ^{\ast
}\right) =\text{sin}\varphi +\varepsilon \varphi ^{\ast }\text{cos}\varphi
\\ 
\text{cos}\Phi =\text{cos}\left( \varphi +\varepsilon \varphi ^{\ast
}\right) =\text{cos}\varphi -\varepsilon \varphi ^{\ast }\text{sin}\varphi%
\end{array}%
\right.
\end{equation*}%
\qquad \qquad

If the equation

\begin{equation*}
\text{sin}\Phi =\frac{\tau }{\kappa ^{2}+\tau ^{2}}
\end{equation*}

is separated into the real and dual part, we can obtain

\begin{equation*}
\left\{ 
\begin{array}{c}
\text{sin}\varphi =\frac{k_{2}}{k_{1}^{2}+k_{2}^{2}} \\ 
\text{cos}\varphi =\frac{k_{1}^{2}+k_{2}^{\ast }-2k_{1}k_{2}k_{1}^{\ast
}-2k_{2}^{2}k_{2}^{\ast }}{\varphi \left( k_{1}^{2}+k_{2}^{2}\right) ^{2}}%
\end{array}%
\right.
\end{equation*}%
\qquad \qquad \qquad

If the equation

\begin{equation*}
\text{cos}\Phi =\frac{\kappa }{\kappa ^{2}+\tau ^{2}}
\end{equation*}

is separated into the real and dual part, we can obtain\qquad \qquad

\begin{equation*}
\left\{ 
\begin{array}{c}
\text{cos}\varphi =\frac{k_{1}}{k_{1}^{2}+k_{2}^{2}} \\ 
\text{sin}\varphi =\frac{2k_{1}^{2}+k_{1}^{\ast }+2k_{1}k_{2}k_{2}^{\ast
}-k_{1}^{2}k_{1}^{\ast }-k_{2}^{2}k_{1}^{\ast }}{\varphi \left(
k_{1}^{2}+k_{2}^{2}\right) ^{2}}%
\end{array}%
\right.
\end{equation*}

\begin{theorem}
Let $\alpha ,\beta ~$be dual curves. If the dual curve~$\beta $ involute of
the dual curve $\alpha $ , curvature and torsion of the dual curve $\beta ~$%
are
\end{theorem}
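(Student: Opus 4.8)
The plan is to apply the non-unit-speed formulae (\ref{2.3}) to the involute $\beta$, reusing the quantities already assembled in the proof of the previous theorem. For the curvature this is immediate: since we have found $\left\Vert \beta^{\prime}\right\Vert =\lambda\kappa$ and $\left\Vert \beta^{\prime}\wedge\beta^{\prime\prime}\right\Vert =\lambda^{2}\kappa^{2}\sqrt{\kappa^{2}+\tau^{2}}$, substituting into $\overset{-}{\kappa}=\left\Vert \beta^{\prime}\wedge\beta^{\prime\prime}\right\Vert /\left\Vert \beta^{\prime}\right\Vert ^{3}$ gives
\begin{equation*}
\overset{-}{\kappa}=\frac{\lambda^{2}\kappa^{2}\sqrt{\kappa^{2}+\tau^{2}}}{\lambda^{3}\kappa^{3}}=\frac{\sqrt{\kappa^{2}+\tau^{2}}}{\lambda\kappa},
\end{equation*}
with $\lambda=\left( c_{1}-s\right) +\varepsilon c_{2}$. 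The only thing to check here is that the division is legitimate in $ID$, i.e. that the real parts $c_{1}-s$ and $k_{1}$ are nonzero, so that $\lambda$ and $\kappa$ are invertible dual numbers.

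The torsion requires more work, through $\overset{-}{\tau}=\det\left( \beta^{\prime},\beta^{\prime\prime},\beta^{\prime\prime\prime}\right) /\left\Vert \beta^{\prime}\wedge\beta^{\prime\prime}\right\Vert ^{2}$. First I would differentiate the expression $\beta^{\prime\prime}=-\lambda\kappa^{2}T+\left( \lambda\kappa^{\prime}-\kappa\right) N+\lambda\kappa\tau B$ once more with respect to $s$, applying the frenet formulae (\ref{2.1}) and the fact that $\lambda^{\prime}=-1$ established in (\ref{3.5}). Rather than compute every entry of $\beta^{\prime\prime\prime}$, I would use the identity $\det\left( \beta^{\prime},\beta^{\prime\prime},\beta^{\prime\prime\prime}\right) =\left\langle \beta^{\prime}\wedge\beta^{\prime\prime},\beta^{\prime\prime\prime}\right\rangle$ together with the already-computed $\beta^{\prime}\wedge\beta^{\prime\prime}=\lambda^{2}\kappa^{2}\tau\,T+\lambda^{2}\kappa^{3}\,B$ from (\ref{3.11}); this means only the $T$- and $B$-components of $\beta^{\prime\prime\prime}$ are needed, which shortens the bookkeeping considerably.

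Carrying this out, the $T$-component of $\beta^{\prime\prime\prime}$ works out to $2\kappa^{2}-3\lambda\kappa\kappa^{\prime}$ and the $B$-component to $2\lambda\kappa^{\prime}\tau-2\kappa\tau+\lambda\kappa\tau^{\prime}$. Forming the inner product and expanding, the two $\lambda^{2}\kappa^{4}\tau$ terms cancel outright and the two $\lambda^{3}\kappa^{3}\kappa^{\prime}\tau$ terms partially cancel, leaving the clean expression
\begin{equation*}
\det\left( \beta^{\prime},\beta^{\prime\prime},\beta^{\prime\prime\prime}\right) =\lambda^{3}\kappa^{3}\left( \kappa\tau^{\prime}-\kappa^{\prime}\tau\right).
\end{equation*}
Dividing by $\left\Vert \beta^{\prime}\wedge\beta^{\prime\prime}\right\Vert ^{2}=\lambda^{4}\kappa^{4}\left( \kappa^{2}+\tau^{2}\right)$ then yields
\begin{equation*}
\overset{-}{\tau}=\frac{\kappa\tau^{\prime}-\kappa^{\prime}\tau}{\lambda\kappa\left( \kappa^{2}+\tau^{2}\right) }.
\end{equation*}

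I expect the main obstacle to be purely the algebra of the third derivative and keeping track of these cancellations; a useful consistency check is that for constant $\kappa,\tau$ (a dual helix) one gets $\det=0$ and hence $\overset{-}{\tau}=0$, reflecting that the involute of a helix is planar. A secondary point worth stating explicitly is that every manipulation is formally valid over the commutative ring $ID$, so the dual result follows from the real computation verbatim, provided the invertibility of $\lambda$ and $\kappa$ noted above holds; separating the two boxed formulae into real and dual parts at the end would then recover the corresponding Euclidean expressions.
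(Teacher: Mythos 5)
Your proposal is correct: I checked your $T$- and $B$-components of $\beta^{\prime\prime\prime}$ (namely $2\kappa^{2}-3\lambda\kappa\kappa^{\prime}$ and $2\lambda\kappa^{\prime}\tau-2\kappa\tau+\lambda\kappa\tau^{\prime}$, using $\lambda^{\prime}=-1$), and the cancellations do leave $\det\left( \beta^{\prime},\beta^{\prime\prime},\beta^{\prime\prime\prime}\right) =\lambda^{3}\kappa^{3}\left( \kappa\tau^{\prime}-\kappa^{\prime}\tau\right)$, giving both stated formulae. Your route coincides with the paper's for the torsion but differs for the curvature. The paper also obtains $\overset{-}{\tau}$ from (\ref{2.3}), but it writes out the full $3\times 3$ determinant whose rows are the Frenet components of $\beta^{\prime},\beta^{\prime\prime},\beta^{\prime\prime\prime}$, so it must compute the $N$-component of $\beta^{\prime\prime\prime}$ as well; your shortcut $\det\left( \beta^{\prime},\beta^{\prime\prime},\beta^{\prime\prime\prime}\right) =\left\langle \beta^{\prime}\wedge\beta^{\prime\prime},\beta^{\prime\prime\prime}\right\rangle$ with (\ref{3.11}) is a genuine economy, though the underlying formula is the same. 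For the curvature, by contrast, the paper does not invoke (\ref{2.3}) at all: it differentiates $\overset{-}{T}=N$ via the chain rule, using $\frac{ds^{\ast}}{ds}=\left\vert \lambda\right\vert \kappa$ from (\ref{3.20}), to get $\overset{-}{\kappa}\,\overset{-}{N}=\frac{-\kappa T+\tau B}{\left\vert \lambda\right\vert \kappa}$ and then reads off $\overset{-}{\kappa}^{2}$ by taking the inner product of this equation with itself. Your direct substitution into $\overset{-}{\kappa}=\left\Vert \beta^{\prime}\wedge\beta^{\prime\prime}\right\Vert /\left\Vert \beta^{\prime}\right\Vert ^{3}$ is shorter and reuses (\ref{3.11})--(\ref{3.12}); the paper's Frenet-differentiation route buys the byproducts $ds^{\ast}/ds$ and the explicit $\overset{-}{N}$, which feed into its later theorems on the Darboux vectors. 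Finally, your remark that the divisions require $\lambda$ and $\kappa$ to be invertible dual numbers (nonvanishing real parts $c_{1}-s$ and $k_{1}$) is a hypothesis the paper uses silently, and is worth making explicit, since a dual number with zero real part has no inverse in $ID$.
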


\begin{equation}
\overset{-}{\kappa }^{2}\left( s\right) =\frac{\kappa ^{2}\left( s\right)
+\tau ^{2}\left( s\right) }{\lambda ^{2}\left( s\right) \kappa ^{2}\left(
s\right) }~,~\overset{-}{\tau }\left( s\right) =\frac{\kappa \left( s\right)
\tau ^{^{\prime }}\left( s\right) -\kappa ^{^{\prime }}\left( s\right) \tau
\left( s\right) }{\lambda \left( s\right) \kappa \left( s\right) \left(
\kappa ^{2}\left( s\right) +\tau ^{2}\left( s\right) \right) }  \label{3.16}
\end{equation}

\begin{proof}
By the defination of involute we can write

\begin{equation}
\beta \left( s\right) =\alpha \left( s\right) +\left\vert \lambda
\right\vert T\left( s\right)  \label{3.17}
\end{equation}

By differentiating the equation (\ref{3.17}) with respect to $s$ we obtain

\begin{eqnarray*}
\frac{d\beta }{ds^{\ast }}\frac{ds^{\ast }}{ds} &=&T\left( s\right)
+\left\vert \lambda \right\vert ^{^{\prime }}T\left( s\right) +\left\vert
\lambda \right\vert \kappa \left( s\right) N\left( s\right) \\
\frac{d\beta }{ds^{\ast }}\frac{ds^{\ast }}{ds} &=&T\left( s\right) -T\left(
s\right) +\left\vert \lambda \right\vert \kappa \left( s\right) N\left(
s\right)
\end{eqnarray*}

\begin{equation}
\overset{-}{T}\left( s\right) \frac{ds^{\ast }}{ds}=\left\vert \lambda
\right\vert \kappa \left( s\right) N\left( s\right)  \label{3.18}
\end{equation}

Since the direction of $\overset{-}{T}\left( s\right) ~$is coincident with $%
N\left( s\right) ~$we have

\begin{equation}
\overset{-}{T}\left( s\right) =N\left( s\right)  \label{3.19}
\end{equation}

Taking the inner product of (\ref{3.18}) with $T$ and necessary operation
are maken we get

\begin{equation}
\frac{ds^{\ast }}{ds}=\left\vert \lambda \left( s\right) \right\vert \kappa
\left( s\right)  \label{3.20}
\end{equation}

By taking derivative of (\ref{3.19}) and applying the frenet formulae (\ref%
{2.1}) we have

\begin{equation}
\overset{-}{T}\left( s\right) =N\left( s\right) \Rightarrow \overset{-}{T}%
^{^{\prime }}\left( s\right) \frac{ds^{\ast }}{ds}=-\kappa T+\tau B
\label{3.21}
\end{equation}

From (\ref{3.20}) and (\ref{3.21}) we have

\begin{equation*}
\overset{-}{T}^{^{\prime }}\left( s\right) =\frac{-\kappa T+\tau B}{%
\left\vert \lambda \left( s\right) \right\vert \kappa \left( s\right) }
\end{equation*}

From the last equation we can write

\begin{equation*}
\overset{-}{\kappa }\left( s\right) \overset{-}{N}\left( s\right) =\frac{%
-\kappa T+\tau B}{\left\vert \lambda \left( s\right) \right\vert \kappa
\left( s\right) }
\end{equation*}

Taking the inner product the last equation with each other we have

\begin{equation*}
\left\langle \overset{-}{\kappa }\left( s\right) \overset{-}{N}\left(
s\right) ,\overset{-}{\kappa }\left( s\right) \overset{-}{N}\left( s\right)
\right\rangle =\left\langle \frac{-\kappa T+\tau B}{\left\vert \lambda
\left( s\right) \right\vert \kappa \left( s\right) },\frac{-\kappa T+\tau B}{%
\left\vert \lambda \left( s\right) \right\vert \kappa \left( s\right) }%
\right\rangle
\end{equation*}

Thus, we find

\begin{equation*}
\overset{-}{\kappa }^{2}\left( s\right) =\frac{\kappa ^{2}\left( s\right)
+\tau ^{2}\left( s\right) }{\lambda ^{2}\left( s\right) \kappa ^{2}\left(
s\right) }
\end{equation*}

We know that

\begin{equation*}
\beta ^{^{\prime }}\wedge \beta ^{^{\prime \prime }}=\lambda ^{2}\kappa
^{2}\tau T+\lambda ^{2}\kappa ^{3}B
\end{equation*}

Taking the norm the last equation we get

\begin{equation*}
\left\Vert \beta ^{^{\prime }}\wedge \beta ^{^{\prime \prime }}\right\Vert
=\kappa ^{4}\lambda ^{4}\left( \kappa ^{2}+\tau ^{2}\right)
\end{equation*}

By substituting these equations in (\ref{2.3}) we get

\begin{equation*}
\overset{-}{\tau }=\frac{\left\vert 
\begin{array}{ccc}
0 & \kappa \lambda  & 0 \\ 
-\kappa ^{2}\lambda  & \left( \kappa \lambda \right) ^{^{\prime }} & \kappa
\tau \lambda  \\ 
\left( -\kappa ^{2}\lambda \right) ^{^{\prime }}-\kappa \left( \kappa
\lambda \right) ^{^{\prime }} & -\kappa ^{3}\lambda +\left( \kappa \lambda
\right) ^{^{\prime \prime }}-\kappa \tau ^{2}\lambda  & \left( \kappa
\lambda \right) ^{^{\prime }}\tau +\left( \kappa \tau \lambda \right)
^{^{\prime }}%
\end{array}%
\right\vert }{\left\Vert \beta ^{^{\prime }}\wedge \beta ^{^{\prime \prime
}}\right\Vert ^{2}}
\end{equation*}

\begin{equation*}
\overset{-}{\tau }=\frac{\kappa \tau ^{^{\prime }}-\kappa ^{^{\prime }}\tau 
}{\kappa \left\vert \lambda \right\vert \left( \kappa ^{2}+\tau ^{2}\right) }
\end{equation*}
\end{proof}

If the equation (\ref{3.16}) is separated into the real and dual part, we
can obtain

\begin{equation*}
\left\{ 
\begin{array}{c}
\overset{-}{k_{1}}=\frac{\sqrt{k_{1}^{2}+k_{2}^{2}}}{\mu k_{1}} \\ 
\overset{-}{k_{1}}^{\ast }=\frac{\left( \mu ^{2}k_{1}^{2}\right) \left(
2k_{1}k_{1}^{\ast }+2k_{2}k_{2}^{\ast }\right) -\left( 2k_{1}k_{1}^{\ast
}\mu ^{2}\right) \left( k_{1}^{2}+k_{2}^{2}\right) }{2\mu ^{3}k_{1}^{3}\sqrt{%
k_{1}^{2}+k_{2}^{2}}}%
\end{array}%
\right.
\end{equation*}

\begin{equation*}
\left\{ 
\begin{array}{c}
\overset{-}{k_{2}}=\frac{k_{1}k_{2}^{^{\prime }}-k_{2}k_{1}^{^{\prime }}}{%
\mu k_{1}\left( k_{1}^{2}+k_{2}^{2}\right) } \\ 
\overset{-}{k_{2}}^{\ast }=\frac{\left( k_{1}k_{2}^{^{\prime \ast
}}+k_{2}^{^{\prime }}k_{1}^{\ast }-k_{1}^{^{\prime }}k_{2}^{\ast
}-k_{2}k_{1}^{^{\prime \ast }}\right) \left( \mu k_{1}^{3}+k_{1}k_{2}^{2}\mu
\right) -\left[ 2\left( k_{1}k_{1}^{\ast }+k_{2}k_{2}^{\ast }\right)
k_{1}\mu +\left( k_{1}^{2}+k_{2}^{2}\right) \left( k_{1}^{\ast }\mu
+k_{1}\mu ^{\ast }\right) \right] \left( k_{1}k_{2}^{^{\prime
}}-k_{2}k_{1}^{^{\prime }}\right) }{\left( \mu k_{1}^{3}+k_{1}k_{2}^{2}\mu
\right) ^{2}}%
\end{array}%
\right.
\end{equation*}

\begin{theorem}
Let $\alpha ,\beta ~$be dual curves and the dual curve~$\beta $ involute of
the dual curve $\alpha $ . If $W$ and $\overset{-}{W}~$are darboux vectors
of the dual curves $\alpha ~$and $\beta ~$we can write
\end{theorem}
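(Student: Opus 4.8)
The plan is to compute the dual Darboux vector of $\beta $ directly from its definition and reduce everything to quantities of $\alpha $ that the previous theorems already express. Recall that the dual Darboux vector of a dual curve is $W=\tau T+\kappa B$, so for $\alpha $ we keep $W=\tau T+\kappa B$ and for its involute $\beta $ we have $\overset{-}{W}=\overset{-}{\tau }\,\overset{-}{T}+\overset{-}{\kappa }\,\overset{-}{B}$. Thus the whole argument consists of substituting the frame relations (\ref{3.10}), (\ref{3.13}) and the curvature--torsion formulas (\ref{3.16}), and then collecting terms.

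The key observation that makes the computation short is that, by (\ref{3.13}), the dual binormal $\overset{-}{B}$ of the involute is exactly the \emph{unit} dual Darboux vector of $\alpha $. Indeed $\left\Vert W\right\Vert =\sqrt{\kappa ^{2}+\tau ^{2}}$, so (\ref{3.13}) reads
\[
\overset{-}{B}=\frac{\tau }{\sqrt{\kappa ^{2}+\tau ^{2}}}T+\frac{\kappa }{\sqrt{\kappa ^{2}+\tau ^{2}}}B=\frac{W}{\left\Vert W\right\Vert }.
\]
Multiplying by $\overset{-}{\kappa }=\frac{\sqrt{\kappa ^{2}+\tau ^{2}}}{\lambda \kappa }$, the positive square root of the expression in (\ref{3.16}), cancels the radicals and gives the binormal contribution $\overset{-}{\kappa }\,\overset{-}{B}=\frac{1}{\lambda \kappa }W$.

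For the tangential contribution I would use $\overset{-}{T}=N$ from (\ref{3.10}) together with $\overset{-}{\tau }=\frac{\kappa \tau ^{\prime }-\kappa ^{\prime }\tau }{\lambda \kappa \left( \kappa ^{2}+\tau ^{2}\right) }$ from (\ref{3.16}). Adding the two pieces yields
\[
\overset{-}{W}=\frac{1}{\lambda \kappa }\left( W+\frac{\kappa \tau ^{\prime }-\kappa ^{\prime }\tau }{\kappa ^{2}+\tau ^{2}}\,N\right) ,
\]
and finally I would recognize the coefficient of $N$ as $\Phi ^{\prime }$: since $\Phi $ is the dual angle with $\tan \Phi =\tau /\kappa $ (equivalently the $\sin \Phi ,\cos \Phi $ of (\ref{3.15})), differentiating gives $\Phi ^{\prime }=\frac{\kappa \tau ^{\prime }-\kappa ^{\prime }\tau }{\kappa ^{2}+\tau ^{2}}$, so that $\overset{-}{W}=\frac{1}{\lambda \kappa }\left( W+\Phi ^{\prime }N\right) $.

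The substitutions are routine; the step that needs the most care is the differentiation producing $\Phi ^{\prime }$, because all the quantities here are dual ($\Phi =\varphi +\varepsilon \varphi ^{\ast }$, $\kappa =k_{1}+\varepsilon k_{1}^{\ast }$, $\tau =k_{2}+\varepsilon k_{2}^{\ast }$), and I must confirm that the real identity $\frac{d}{ds}\arctan (\tau /\kappa )=\frac{\kappa \tau ^{\prime }-\kappa ^{\prime }\tau }{\kappa ^{2}+\tau ^{2}}$ carries over verbatim to the dual ring with $\varepsilon ^{2}=0$. Since $ID$ is commutative and the Frenet formulas (\ref{2.1}) hold formally, the quotient rule applies unchanged, so this is legitimate. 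Separating $\overset{-}{W}=\frac{1}{\lambda \kappa }\left( W+\Phi ^{\prime }N\right) $ into real and dual parts, as the paper does for its other theorems, then recovers the companion relations for the real Darboux vector and the $\varepsilon $-component.
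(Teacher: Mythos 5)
Your proposal is correct and follows essentially the same route as the paper: both start from $\overset{-}{W}=\overset{-}{\tau }\,\overset{-}{T}+\overset{-}{\kappa }\,\overset{-}{B}$, substitute the frame relations $\overset{-}{T}=N$, $\overset{-}{B}=\bigl(\tau T+\kappa B\bigr)/\sqrt{\kappa ^{2}+\tau ^{2}}$ and the formulas (\ref{3.16}), and identify the coefficient of $N$ as $\Phi ^{\prime }$ by differentiating $\tan \Phi =\tau /\kappa $. Your shortcut of reading $\overset{-}{B}$ as $W/\left\Vert W\right\Vert $ merely skips the paper's detour of writing $\overset{-}{B}$ in terms of $\sin \Phi ,\cos \Phi $ and converting back via (\ref{3.15}); the computation is otherwise identical.
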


\begin{equation}
\overset{-}{W}=\frac{1}{\lambda \kappa }\left( W+\Phi ^{^{\prime }}N\right)
\label{3.22}
\end{equation}

\begin{proof}
Since ~$\overset{-}{W}~$is darboux vector of $\beta \left( s\right) $\ we
can write

\begin{equation}
\overset{-}{W}\left( s\right) =\overset{-}{\tau }\left( s\right) \overset{-}{%
T}\left( s\right) +\overset{-}{\kappa }\left( s\right) \overset{-}{B}\left(
s\right)  \label{3.23}
\end{equation}

By substituting $\overset{-}{\tau },\overset{-}{T},\overset{-}{\kappa },%
\overset{-}{B}~$in the last equation we get\qquad

\begin{equation}
\overset{-}{W}\left( s\right) =\frac{\kappa \tau ^{^{\prime }}-\kappa
^{^{\prime }}\tau }{\kappa \left\vert \lambda \right\vert \left( \kappa
^{2}+\tau ^{2}\right) }N\left( s\right) +\frac{\sqrt{\kappa ^{2}+\tau ^{2}}}{%
\kappa \left\vert \lambda \right\vert }\left( \text{sin}\Phi T+\text{cos}%
\Phi B\right)  \label{3.24}
\end{equation}

By substituting (\ref{3.15}) in (\ref{3.24}) we get\qquad

\begin{equation*}
\overset{-}{W}\left( s\right) =\frac{\kappa \tau ^{^{\prime }}-\kappa
^{^{\prime }}\tau }{\kappa \left\vert \lambda \right\vert \left( \kappa
^{2}+\tau ^{2}\right) }N\left( s\right) +\frac{\sqrt{\kappa ^{2}+\tau ^{2}}}{%
\kappa \left\vert \lambda \right\vert }\left( \frac{\tau T+\kappa B}{\sqrt{%
\kappa ^{2}+\tau ^{2}}}\right)
\end{equation*}

The necessary operation are maken, we get

\begin{equation*}
\overset{-}{W}\left( s\right) =\frac{\tau T+\kappa B}{\kappa \left\vert
\lambda \right\vert }+\frac{\kappa \tau ^{^{\prime }}-\kappa ^{^{\prime
}}\tau }{\kappa \left\vert \lambda \right\vert \left( \kappa ^{2}+\tau
^{2}\right) }N\left( s\right)
\end{equation*}

\begin{equation*}
\overset{-}{W}\left( s\right) =\frac{1}{\kappa \left\vert \lambda
\right\vert }\left( \tau T+\kappa B+\frac{\kappa \tau ^{^{\prime }}-\kappa
^{^{\prime }}\tau }{\kappa ^{2}+\tau ^{2}}N\right)
\end{equation*}

and

\begin{equation*}
\overset{-}{W}\left( s\right) =\frac{1}{\kappa \left\vert \lambda
\right\vert }\left( W+\frac{\left( \frac{\tau }{\kappa }\right) ^{^{\prime
}}\kappa ^{2}}{\kappa ^{2}+\tau ^{2}}N\right)
\end{equation*}

Furthermore, Since

\begin{equation*}
\frac{\text{sin}\Phi }{\text{cos}\Phi }=\frac{\tau \diagup \sqrt{\kappa
^{2}+\tau ^{2}}}{\kappa \diagup \sqrt{\kappa ^{2}+\tau ^{2}}}
\end{equation*}

\begin{equation*}
\frac{\tau }{\kappa }=\text{tan}\Phi 
\end{equation*}

By taking derivative of the last equation we have 

\begin{equation*}
\Phi ^{^{\prime }}\text{sec}^{2}\Phi =\left( \frac{\tau }{\kappa }\right)
^{^{\prime }}
\end{equation*}

The necessary operations are maken, we get

\begin{equation*}
\Phi ^{^{\prime }}=\left( \frac{\tau }{\kappa }\right) ^{^{\prime }}\frac{%
\kappa }{\kappa ^{2}+\tau ^{2}}
\end{equation*}

In this situation, the proof is completed

\begin{equation*}
\overset{-}{W}\left( s\right) =\frac{1}{\kappa \left\vert \lambda
\right\vert }\left( W+\Phi ^{^{\prime }}N\right) 
\end{equation*}
\end{proof}

If the equation (\ref{3.22}) is separated into the real and dual part, we
can obtain

\begin{equation*}
\left\{ 
\begin{array}{c}
\overset{-}{w}=\frac{w+\varphi ^{^{\prime }}n}{\mu k_{1}} \\ 
\overset{-}{w}^{\ast }=\frac{\mu k_{1}\left( w^{\ast }+\varphi ^{^{\prime
}}n+\varphi ^{^{\prime }\ast }n\right) -\left( \mu k_{1}^{\ast }+\mu ^{\ast
}k_{1}\right) \left( w+\varphi ^{^{\prime }}n\right) }{\mu ^{2}k_{1}^{2}}%
\end{array}%
\right. 
\end{equation*}%
\qquad \qquad If the equation (\ref{3.24}) is separated into the real and
dual part, we can obtain%
\begin{equation*}
\left\{ 
\begin{array}{c}
\overset{-}{w}=\frac{\sqrt{k_{1}^{2}+k_{2}^{2}}}{\mu k_{1}}\left( \text{sin}%
\varphi t+\text{cos}\varphi b\right)  \\ 
\overset{-}{w}^{\ast }=\frac{\sqrt{k_{1}^{2}+k_{2}^{2}}}{\mu k_{1}}\left( 
\text{sin}\varphi t^{\ast }+\text{cos}\varphi b^{\ast }+\varphi ^{\ast
}\left( \text{cos}\varphi t-\text{sin}\varphi b\right) \right) +\frac{\mu
k_{1}\left( k_{1}k_{1}^{\ast }+k_{2}k_{2}^{\ast }\right) -\left(
k_{1}^{2}+k_{2}^{2}\right) \left( \mu k_{1}^{\ast }+\mu ^{\ast }k_{1}\right) 
}{\sqrt{k_{1}^{2}+k_{2}^{2}}\mu ^{2}k_{1}^{2}}\left( \text{sin}\varphi t+%
\text{cos}\varphi b\right) 
\end{array}%
\right. 
\end{equation*}
\ 

\begin{theorem}
\bigskip Let $\alpha ,\beta ~$be dual curves and the dual curve~$\beta $
involute of the dual curve $\alpha $ . If $C$ and $\overset{-}{C}~$are unit
vectors of the direction of  $~W~$and$\overset{-}{~W}$ , respectively
\end{theorem}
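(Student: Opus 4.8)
The statement evidently asserts a relation of the form $\overset{-}{C}=\cos\overset{-}{\Phi}\,C+\sin\overset{-}{\Phi}\,N$, expressing the involute's unit Darboux direction through the frame of $\alpha$, where $\overset{-}{\Phi}$ is the dual angle attached to $\beta$ in the same way $\Phi$ is attached to $\alpha$. The plan is to read $\overset{-}{C}$ off the Darboux decomposition for $\beta$ and then collapse it using the frame relations already proved, rather than recomputing anything from the definition of $\beta$. By definition $\overset{-}{C}=\overset{-}{W}/\Vert\overset{-}{W}\Vert$, and since $\overset{-}{W}=\overset{-}{\tau}\,\overset{-}{T}+\overset{-}{\kappa}\,\overset{-}{B}$ as in (\ref{3.23}), introducing the dual angle $\overset{-}{\Phi}$ between $\overset{-}{W}$ and $\overset{-}{B}$ gives $\sin\overset{-}{\Phi}=\overset{-}{\tau}/\sqrt{\overset{-}{\kappa}^{2}+\overset{-}{\tau}^{2}}$ and $\cos\overset{-}{\Phi}=\overset{-}{\kappa}/\sqrt{\overset{-}{\kappa}^{2}+\overset{-}{\tau}^{2}}$, so that normalizing yields $\overset{-}{C}=\sin\overset{-}{\Phi}\,\overset{-}{T}+\cos\overset{-}{\Phi}\,\overset{-}{B}$.

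The key observation, which makes the whole statement collapse, is that the unit Darboux vector $C$ of $\alpha$ is exactly the binormal $\overset{-}{B}$ of its involute. Indeed $C=W/\Vert W\Vert=(\tau T+\kappa B)/\sqrt{\kappa^{2}+\tau^{2}}$, and comparison with (\ref{3.13}) shows this equals $\sin\Phi\,T+\cos\Phi\,B=\overset{-}{B}$. Feeding this together with $\overset{-}{T}=N$ from (\ref{3.10}) into the expression for $\overset{-}{C}$, I obtain $\overset{-}{C}=\cos\overset{-}{\Phi}\,C+\sin\overset{-}{\Phi}\,N$, which is the asserted relation between the two unit Darboux directions.

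As an independent verification I would also derive the same identity straight from (\ref{3.22}). Writing $W=\sqrt{\kappa^{2}+\tau^{2}}\,C$, that formula becomes $\overset{-}{W}=(\lambda\kappa)^{-1}(\sqrt{\kappa^{2}+\tau^{2}}\,C+\Phi^{\prime}N)$, and because $C$ and $N$ are orthogonal unit dual vectors the dual norm is $\Vert\overset{-}{W}\Vert=(\lambda\kappa)^{-1}\sqrt{(\kappa^{2}+\tau^{2})+\Phi^{\prime 2}}$. Dividing produces the coefficients $\sqrt{\kappa^{2}+\tau^{2}}/\sqrt{(\kappa^{2}+\tau^{2})+\Phi^{\prime 2}}$ and $\Phi^{\prime}/\sqrt{(\kappa^{2}+\tau^{2})+\Phi^{\prime 2}}$; substituting $\overset{-}{\kappa}=\sqrt{\kappa^{2}+\tau^{2}}/(\lambda\kappa)$ from (\ref{3.16}) and $\overset{-}{\tau}=\Phi^{\prime}/(\lambda\kappa)$, the latter obtained by combining (\ref{3.16}) with the expression $\Phi^{\prime}=(\kappa\tau^{\prime}-\kappa^{\prime}\tau)/(\kappa^{2}+\tau^{2})$ derived in the previous theorem, shows these coefficients are precisely $\cos\overset{-}{\Phi}$ and $\sin\overset{-}{\Phi}$, matching the first route.

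The computations are routine; the only delicate point is the dual bookkeeping. One must check that $\sin\overset{-}{\Phi}$ and $\cos\overset{-}{\Phi}$ are consistent with the dual curvature $\overset{-}{\kappa}$ and torsion $\overset{-}{\tau}$ of $\beta$, which themselves carry an $\varepsilon$-part, and that the relation $\langle C,N\rangle=0$ used to compute the norm holds as a dual number with both its real and $\varepsilon$-components vanishing. Once the identity $C=\overset{-}{B}$ is secured there is no further obstacle, and the real and dual splitting of the result follows by the same substitution $\Phi=\varphi+\varepsilon\varphi^{\ast}$ that the paper applies to its earlier theorems.
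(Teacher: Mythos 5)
Your proposal is correct and takes essentially the same route as the paper: decompose $\overset{-}{C}=\overset{-}{W}/\Vert \overset{-}{W}\Vert$ in the involute's frame via the dual angle between $\overset{-}{W}$ and $\overset{-}{B}$, obtain the sine and cosine by substituting $\overset{-}{\kappa }$ and $\overset{-}{\tau }$ from (\ref{3.16}), and pull back to $\alpha$'s frame using $\overset{-}{T}=N$ and $\overset{-}{B}=C$. Your explicit statement of the identification $C=\overset{-}{B}$ (which the paper uses only silently in its last line) and your secondary verification through (\ref{3.22}) are welcome clarifications, not departures from the paper's argument.
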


\begin{equation}
\overset{-}{C}=\frac{\Phi ^{^{\prime }}}{\sqrt{\Phi ^{^{\prime }2}+\kappa
^{2}+\tau ^{2}}}N+\frac{\sqrt{\kappa ^{2}+\tau ^{2}}}{\sqrt{\Phi ^{^{\prime
}2}+\kappa ^{2}+\tau ^{2}}}C  \label{3.25}
\end{equation}

\begin{proof}
Since $\beta ~$the dual angle between $\overset{-}{~W}~$and $\overset{-}{B}~$%
we can write

\begin{equation*}
\overset{-}{C}\left( s\right) =\text{sin}\beta \overset{-}{T}\left( s\right)
+\text{cos}\beta \overset{-}{B}\left( s\right) 
\end{equation*}

In here, we want to find the statements sin$\beta ~$and cos$\beta ,$

We know that

\begin{equation*}
\text{sin}\beta =\frac{\overset{-}{\tau }}{\left\Vert \overset{-}{W}%
\right\Vert }=\frac{\overset{-}{\tau }}{\sqrt{\overset{-}{\kappa }^{2}+%
\overset{-}{\tau }^{2}}}
\end{equation*}

By substituting  $\overset{-}{\tau }~$and $\overset{-}{\kappa }~$in the last
equation and necessary operatios are maken, we get

\begin{equation}
\text{sin}\beta =\frac{\Phi ^{^{\prime }}}{\sqrt{\Phi ^{^{\prime }2}+\kappa
^{2}+\tau ^{2}}}  \label{3.26}
\end{equation}

Similary,

\begin{equation}
\text{cos}\beta =\frac{\sqrt{\kappa ^{2}+\tau ^{2}}}{\sqrt{\Phi ^{^{\prime
}2}+\kappa ^{2}+\tau ^{2}}}  \label{3.27}
\end{equation}

Thus we find

\begin{equation*}
\overset{-}{C}=\frac{\Phi ^{^{\prime }}}{\sqrt{\Phi ^{^{\prime }2}+\kappa
^{2}+\tau ^{2}}}\overset{-}{T}+\frac{\sqrt{\kappa ^{2}+\tau ^{2}}}{\sqrt{%
\Phi ^{^{\prime }2}+\kappa ^{2}+\tau ^{2}}}C
\end{equation*}
\end{proof}

If the equation (\ref{3.25}) is separated into the real and dual part, we
can obtain

\begin{equation*}
\left\{ 
\begin{array}{c}
\overset{-}{c}=\frac{\varphi ^{^{\prime }}n+\sqrt{k_{1}^{2}+k_{2}^{2}}c}{%
\sqrt{\varphi ^{^{\prime }}+k_{1}^{2}+k_{2}^{2}}} \\ 
\overset{-}{c}^{\ast }=\frac{\varphi ^{^{\prime }}n^{\ast }+\varphi
^{^{\prime ^{\ast }}}n+\sqrt{k_{1}^{2}+k_{2}^{2}}c^{\ast }+\frac{%
k_{1}k_{1}^{\ast }+k_{2}k_{2}^{\ast }}{\sqrt{k_{1}^{2}+k_{2}^{2}}}c-\frac{%
\varphi ^{^{\prime }}n\left( \sqrt{k_{1}^{2}+k_{2}^{2}}\right) c\left(
\varphi ^{^{\prime }}\varphi ^{^{\prime ^{\ast }}}+k_{1}k_{1}^{\ast
}+k_{2}k_{2}^{\ast }\right) }{\sqrt{\varphi ^{^{\prime }}+k_{1}^{2}+k_{2}^{2}%
}}}{\sqrt{\varphi ^{^{\prime }}+k_{1}^{2}+k_{2}^{2}}}%
\end{array}%
\right. 
\end{equation*}

If the equation (\ref{3.26}) and (\ref{3.27}) are separated into the real
and dual part, we can obtain

\begin{equation*}
\left\{ 
\begin{array}{c}
\text{sin}\overset{-}{\varphi }=\frac{\varphi ^{^{\prime }}}{\sqrt{\varphi
^{^{\prime }}+k_{1}^{2}+k_{2}^{2}}} \\ 
\text{cos}\overset{-}{\varphi }=\frac{\left( \Phi ^{^{\prime }2}+\kappa
^{2}+\tau ^{2}\right) \Phi ^{^{\prime }\ast }-\varphi ^{^{\prime }}\varphi
^{^{^{\ast }}}+k_{1}k_{1}^{\ast }+k_{2}k_{2}^{\ast }\varphi ^{^{\prime }}}{%
\overset{-}{\varphi }^{\ast }\left( \Phi ^{^{\prime }2}+\kappa ^{2}+\tau
^{2}\right) ^{\frac{3}{2}}}%
\end{array}%
\right. 
\end{equation*}

\begin{equation*}
\left\{ 
\begin{array}{c}
\text{cos}\overset{-}{\varphi }=\sqrt{\frac{k_{1}^{2}+k_{2}^{2}}{\varphi
^{^{\prime ^{2}}}+k_{1}^{2}+k_{2}^{2}}} \\ 
\text{sin}\overset{-}{\varphi }=\frac{\left( \varphi ^{^{\prime }}\varphi
^{^{\prime ^{\ast }}}+k_{1}k_{1}^{\ast }+k_{2}k_{2}^{\ast }\right) \sqrt{%
k_{1}^{2}+k_{2}^{2}}-\left( \varphi ^{^{\prime
^{2}}}+k_{1}^{2}+k_{2}^{2}\right) \left( k_{1}k_{1}^{\ast }+k_{2}k_{2}^{\ast
}\right) }{\overset{-}{\varphi }^{\ast }\left( \Phi ^{^{\prime }2}+\kappa
^{2}+\tau ^{2}\right) ^{\frac{3}{2}}\sqrt{k_{1}^{2}+k_{2}^{2}}}%
\end{array}%
\right. 
\end{equation*}%
\qquad \qquad 

\begin{corollary}
Let $\alpha ,\beta ~$be dual curves and the dual curve~$\beta $ involute of
the dual curve $\alpha $ . If evolute curve $\alpha ~$is helix,
\end{corollary}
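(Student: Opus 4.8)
The plan is to push equation (\ref{3.25}) through the helix hypothesis by showing that for a general dual helix $\alpha$ the dual angle $\Phi$ is constant, so that $\Phi^{\prime}=0$ and the $N$-component of $\overset{-}{C}$ disappears. I expect the intended conclusion of the corollary to be
\begin{equation*}
\overset{-}{C}=C,
\end{equation*}
that is, the unit dual Darboux direction of the involute $\beta$ coincides with the unit dual Darboux direction of its evolute $\alpha$.

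First I would invoke the dual form of Lancret's theorem: a dual space curve is a general helix exactly when the ratio $\tau/\kappa$ of its dual torsion to its dual curvature is a dual constant. The proof of the Darboux-vector theorem has already produced the identity $\dfrac{\tau}{\kappa}=\tan\Phi$, where $\Phi=\varphi+\varepsilon\varphi^{\ast}$ is the dual angle between $W$ and $B$. Hence assuming $\alpha$ to be a helix forces $\tan\Phi$, and therefore $\Phi$ itself, to be constant along the curve, so that $\Phi^{\prime}=0$ as a full dual equation (both the real part $\varphi^{\prime}$ and the $\varepsilon$-part $\varphi^{\prime\ast}$ vanish).

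The key step is then a one-line substitution. Putting $\Phi^{\prime}=0$ into (\ref{3.25}),
\begin{equation*}
\overset{-}{C}=\frac{\Phi^{\prime}}{\sqrt{\Phi^{\prime 2}+\kappa^{2}+\tau^{2}}}N+\frac{\sqrt{\kappa^{2}+\tau^{2}}}{\sqrt{\Phi^{\prime 2}+\kappa^{2}+\tau^{2}}}C,
\end{equation*}
the first summand is zero and the coefficient of $C$ reduces to $\sqrt{\kappa^{2}+\tau^{2}}\big/\sqrt{\kappa^{2}+\tau^{2}}=1$, which yields $\overset{-}{C}=C$. Equivalently, from (\ref{3.26}) and (\ref{3.27}) the dual angle between $\overset{-}{W}$ and $\overset{-}{B}$ has $\sin=0$ and $\cos=1$, so this angle vanishes and the two unit Darboux directions agree.

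The only delicate point is the justification of the helix characterization in the dual setting: one must verify that the correct hypothesis is that $\tau/\kappa$ be a \emph{dual} constant (not merely its real part), and that this indeed forces the full dual quantity $\Phi^{\prime}$ to vanish; granting this, no further obstacle remains. As a side remark, under the same hypothesis the torsion formula in (\ref{3.16}) gives $\overset{-}{\tau}=0$, since $\kappa\tau^{\prime}-\kappa^{\prime}\tau=0$ whenever $\tau/\kappa$ is constant, so the involute $\beta$ is in fact a planar dual curve.
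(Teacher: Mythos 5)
Your proposal is correct and follows essentially the same route as the paper's own proof: both use the helix hypothesis to make $\tau/\kappa=\tan\Phi$ constant, hence $\Phi^{\prime}=0$, and then substitute into (\ref{3.25})--(\ref{3.27}) to obtain $\overset{-}{C}=C$ and the vanishing of the dual angle between $\overset{-}{W}$ and $\overset{-}{B}$ (i.e.\ their linear dependence). Your closing remark that $\kappa\tau^{\prime}-\kappa^{\prime}\tau=0$ forces $\overset{-}{\tau}=0$ is exactly the paper's part iii), which it derives slightly more indirectly via the ratio $\overset{-}{\tau}/\overset{-}{\kappa}$ in (\ref{3.31}), so all three stated conclusions are covered.
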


i)\qquad The vectors $\overset{-}{~W}~$and $\overset{-}{~B}~$of the involute
curve $\beta $\ are linearly dependent.

ii)\qquad $C$=$\overset{-}{C}$

iii)\qquad $\beta $\ is planar.

\begin{proof}
i) If the evolute\ curve $\alpha $\ is helix, then we have

\begin{equation*}
\frac{\tau }{\kappa }=\text{tan}\Phi =cons\text{ or }\Phi ^{^{\prime }}=0
\end{equation*}

and then we have

\begin{equation}
\left\{ 
\begin{array}{c}
\text{sin}\overset{-}{\Phi }=0 \\ 
\text{cos}\overset{-}{\Phi }=1%
\end{array}%
\right.   \label{3.28}
\end{equation}

Thus, we get

\begin{equation}
\overset{-}{\Phi }=0  \label{3.29}
\end{equation}

ii) Substituting by the equation (\ref{3.29}) into the equation (\ref{3.25})
, we have

\begin{equation*}
C=\overset{-}{C}
\end{equation*}

iii ) For being is a helix , then we have

\begin{equation*}
\frac{\tau }{\kappa }=cons
\end{equation*}

\begin{equation}
\left( \frac{\tau }{\kappa }\right) ^{^{\prime }}=0  \label{3.30}
\end{equation}

On the other hand, from the equation (\ref{3.16}) , we can write

\begin{equation*}
\frac{\overset{-}{\tau }}{\overset{-}{\kappa }}=\frac{\frac{\kappa \tau
^{^{\prime }}-\kappa ^{^{\prime }}\tau }{\lambda \kappa \left( \kappa
^{2}+\tau ^{2}\right) }}{\frac{\left( ^{\kappa ^{2}+\tau ^{2}}\right) ^{%
\frac{1}{2}}}{\lambda \kappa }}
\end{equation*}

and

\begin{equation}
\frac{\overset{-}{\tau }}{\overset{-}{\kappa }}=\frac{\left( \frac{\tau }{%
\kappa }\right) ^{^{\prime }}\kappa ^{2}}{\left( ^{\kappa ^{2}+\tau
^{2}}\right) ^{\frac{3}{2}}}  \label{3.31}
\end{equation}

Substituting by the equation (\ref{3.30}) into the equation (\ref{3.31})
,then we find

\begin{equation*}
\overset{-}{\tau }=0
\end{equation*}
\end{proof}


\begin{thebibliography}{9}
\bibitem{1.} On the Involutes of Timelike Curves in $IR_{1}^{3}$~, IV.
International Geometry Symposium, Zonguldak Karaelmas University (17-21 July
2006).

\bibitem{2.} On the Involutes of the Spacelike Curve with a Timelike
Binormal in Minkowski 3-Space, International Mathematical Forum, 4, no.31,
1497-1509, (2009).

\bibitem{3.} Some Characterizations for The Pair of Involute-Evolute Curves
in Euclidean Space $E^{3}$, Bulletin of Pure and Applied Sciences. Vol. 21E
no.2, 289-294, (2002)

\bibitem{4.} On the Differential Geometry of Closed Space Curves, Bull.
Amer. 

\bibitem{5.} Introduction to Differential Geometry, Addison Wesley
Publishing Company, London 350p, (1970). 

\bibitem{6.} Acceleration Axes in Spatial Kinematics I, Communications, S%
\'{e}rie A: Math\'{e}matiques, Physique et Astronomie, Tome 20 A, pp. 1-15,
Ann\'{e}e 1971.

\bibitem{7.} Schaum's Outline of Theory and Problems of Differential
Geometry 277p, 1969.

\bibitem{8.} Elements of Differential Geometry, Prentice-Hall Inc.,
Englewood Cliffs, New Jersey 265p, (1977).

\bibitem{9.} On Rectifying Dual Space Curves , Rev. Mat. Complut., 20(2)
(2007), 497-506.
\end{thebibliography}
\end{document}